\title{A new generalization of the Genocchi numbers and its consequence on the Bernoulli polynomials}
\author{\sc Bakir FARHI \\
Laboratoire de Mathématiques appliquées \\
Faculté des Sciences Exactes \\
Université de Bejaia, 06000 Bejaia, Algeria \\[1mm]
\href{mailto:bakir.farhi@gmail.com}{bakir.farhi@gmail.com} \\[1mm]
\url{http://farhi.bakir.free.fr/}
}
\date{}
\let\up=\textsuperscript
\def\R{{\mathbb R}}
\def\Q{{\mathbb Q}}
\def\N{{\mathbb N}}
\def\Z{{\mathbb Z}}
\def\G{\mathcal{G}}
\def\int{\mathrm{Int}}
\def\den{\mathrm{den}}    
\def\EMdash{\leavevmode\hbox to 10.6mm{\vrule height .63ex depth -.59ex
    width 10mm\hfill}}
\theoremstyle{plain}
\numberwithin{equation}{section}
\newtheorem{thm}{Theorem}[section]
\newtheorem{lemma}[thm]{Lemma}
\newtheorem{rmq}[thm]{Remark}
\newtheorem{prop}[thm]{Proposition}
\newtheorem{coll}[thm]{Corollary}
\begin{document}
\maketitle
\begin{abstract}
This paper presents a new generalization of the Genocchi numbers and the Genocchi theorem. As consequences, we obtain some important families of integer-valued polynomials those are closely related to the Bernoulli polynomials. Denoting by ${(B_n)}_{n \in \mathbb{N}}$ the sequence of the Bernoulli numbers and by ${(B_n(X))}_{n \in \mathbb{N}}$ the sequence of the Bernoulli polynomials, we especially obtain that for any natural number $n$, the reciprocal polynomial of the polynomial $\big(B_n(X) - B_n\big)$ is integer-valued. 
\end{abstract}
\noindent\textbf{MSC 2010:} Primary 11B68, 13F20, 13F25, 11C08. \\
\textbf{Keywords:} Genocchi numbers, Bernoulli numbers, Bernoulli polynomials, formal power series, integer-valued polynomials.

\section{Introduction and Notations}
Throughout this paper, we let $\N^*$ denote the set of positive integers. For a given prime number $p$, we let $\vartheta_p$ denote the usual $p$-adic valuation. The rational numbers $x$ satisfying $\vartheta_p(x) \geq 0$ are called {\it $p$-integers}; they constitute a subring of $\Q$, usually denoted by $\Z_{(p)}$. For a given natural number $r$, we let $\den(r)$ denote the denominator of $r$; that is the smallest positive integer $d$ such that $d r \in \Z$.

Next, we let $\Q[X]$ denote the ring of polynomials in $X$ with coefficients in $\Q$. If $P \in \Q[X]$, we let $\deg P$ denote the degree of $P$. We call the \textit{reciprocal polynomial} of a polynomial $P \in \Q[X]$ the polynomial $P^*$ ($\in \Q[X]$) obtained by reversing the order of the coefficients of $P$; for example $(2 X^3 + 5 X^2 + 7 X + 3)^* = 3 X^3 + 7 X^2 + 5 X + 2$. It is easy to show that for any $P \in \Q[X]$, we have $P^*(X) = X^{\deg P} P(\frac{1}{X})$. We let $\Delta$ denote the forward difference operator on $\Q[X]$; that is $(\Delta P)(X) := P(X + 1) - P(X)$ ($\forall P \in \Q[X]$). A polynomial $P \in \Q[X]$ whose value $P(n)$ is an integer for every integer $n$ (i.e., $P(\Z) \subset \Z$) is called an \textit{integer-valued polynomial}. The set of integer-valued polynomials is denoted by $\int(Z)$ and forms a $\Z$-algebra (under the usual operations on polynomials). It is known (see, e.g., \cite{cah,pol}) that $\int(Z)$ (seen as a $\Z$-module) is free with infinite rank and has as a basis the sequences of polynomials $\binom{X}{n}$ ($n \in \N$), where $\binom{X}{n} := \frac{X (X - 1) \cdots (X - n + 1)}{n!}$ ($\forall n \in \N$). An exhaustive study of the integer-valued polynomials (including the integer-valued polynomials on a general domain) is given in the book of Cahen and Chabert \cite{cah}.  

Further, \textit{the Bernoulli polynomials} $B_n(X)$ ($n \in \N$) can be defined by the exponential generating function:
$$
\frac{t e^{X t}}{e^t - 1} ~=~ \sum_{n = 0}^{+ \infty} B_n(X) \frac{t^n}{n!}
$$
and \textit{the Bernoulli numbers} $B_n$ are the values of the Bernoulli polynomials at $X = 0$; that is $B_n := B_n(0)$ ($\forall n \in \N$). To mark the difference between the Bernoulli polynomials and the Bernoulli numbers, we always put the indeterminate $X$ in evidence when it comes to polynomials. The Bernoulli polynomials and numbers have many important and remarkable properties; an elementary presentation (but quite rich) can be found in the book of Nielsen \cite{nie}. It is known for example that $\deg B_n(X) = n$ ($\forall n \in \N$) and that $B_n = 0$ for any odd integer $n \geq 3$.

Throughout this paper, we deal with \textit{formal power series} with rational coefficients. We denote by $\Q[[t]]$ the ring of formal power series in $t$ with coefficients in $\Q$. An element $S$ of $\Q[[t]]$ is always represented as
$$
S(t) ~:=~ \sum_{n = 0}^{+ \infty} a_n \frac{t^n}{n!} ,
$$
where $a_n \in \Q$ ($\forall n \in \N$). The $a_n$'s are called \textit{the differential coefficients} of $S$ (because it is immediate that each $a_n$ is the $n$\up{th} derivative of $S$ at $0$). If the $a_n$'s are all integers, we say that $S$ is an \textit{IDC-series} (IDC abbreviates the expression ``with Integral Differential Coefficients''). Many usual functions are IDC-series; we can cite for example the functions $x \mapsto e^x$, $x \mapsto \sin{x}$, $x \mapsto \cos{x}$, $x \mapsto \ln(1 + x)$, and so on. The sum of two IDC-series is obviously an IDC-series. The product of two IDC-series is also an IDC-series. Indeed, if $S_1$ and $S_2$ are two IDC-series with
$$
S_1(t) ~=~ \sum_{n = 0}^{+ \infty} a_n \frac{t^n}{n!} ~~,~~ S_2(t) ~=~ \sum_{n = 0}^{+ \infty} b_n \frac{t^n}{n!}
$$
(so $a_n , b_n \in \Z$, $\forall n \in \N$) then we have
\begin{multline*}
S_1(t) S_2(t) ~=~ \left(\sum_{k = 0}^{+ \infty} a_k \frac{t^k}{k!}\right) \left(\sum_{\ell = 0}^{+ \infty} b_{\ell} \frac{t^{\ell}}{\ell!}\right) ~=~ \sum_{k , \ell \in \N} \frac{(k + \ell)!}{k! \ell!} a_k b_{\ell} \frac{t^{k + \ell}}{(k + \ell)!} \\
=~ \sum_{n = 0}^{+ \infty} \left(\sum_{\begin{subarray}{c}
k , \ell \in \N \\
k + \ell = n
\end{subarray}} \frac{n!}{k! \ell!} a_k b_{\ell}\right) \frac{t^n}{n!} ~=~ \sum_{n = 0}^{+ \infty} \left(\sum_{k = 0}^{n} \binom{n}{k} a_k b_{n - k}\right) \frac{t^n}{n!} ~=~ \sum_{n = 0}^{+ \infty} c_n \frac{t^n}{n!} ,
\end{multline*}
where
$$
c_n ~:=~ \sum_{k = 0}^{n} \binom{n}{k} a_k b_{n - k} ~~~~~~~~~~ (\forall n \in \N) .
$$
Since $a_k , b_k \in \Z$ ($\forall n \in \N$) then $c_n \in \Z$ ($\forall n \in \N$), showing that $S_1 S_2$ is an IDC-series.

Showing that a given function is an IDC-series is not always easy. The more famous example is perhaps the function $t \mapsto \frac{2 t}{e^t + 1}$ whose expansion into a power series is
$$
\frac{2 t}{e^t + 1} ~=~ \sum_{n = 0}^{+ \infty} G_n \frac{t^n}{n!} ,
$$
where the $G_n$'s are called \textit{the Genocchi numbers} and have been studied by several authors (see, e.g., \cite{com,dum2,gan,rio}). An important theorem of Genocchi \cite{gen} states that the $G_n$'s are all integers; equivalently, the function $t \mapsto \frac{2 t}{e^t + 1}$ is and IDC-series. A familiar proof of this curious result uses the expression of the $G_n$'s in terms of the Bernoulli numbers:
$$
G_n ~=~ - 2 (2^n - 1) B_n ~~~~~~~~~~ (\forall n \in \N) ,
$$
together with the von Staudt-Clausen theorem and the Fermat little theorem.

In this paper, we generalize the Genocchi numbers by considering for a given integer $a \geq 2$, the function $t \mapsto \frac{a t}{e^{(a - 1) t} + e^{(a - 2) t} + \dots + e^t + 1}$ and its expansion into a power series:
$$
\frac{a t}{e^{(a - 1) t} + e^{(a - 2) t} + \dots + e^t + 1} ~=~ \sum_{n = 0}^{+ \infty} G_{n , a} \frac{t^n}{n!} .
$$
For $a = 2$, we simply obtain the usual Genocchi numbers; that is $G_{n , 2} = G_n$ ($\forall n \in \N$). In our main Theorem \ref{t1}, we prove that the $G_{n , a}$'s are all integers, which generalizes the Genocchi theorem. Next, by interpolating the numbers $G_{n , a}$ ($a \geq 2$), for a fixed $n \in \N$, we derive some important families of integer-valued polynomials which are closely related to the Bernoulli polynomials. We particularly obtain that for any natural number $n$, the polynomial $(B_n(X) - B_n)^*$ is integer-valued.

\section{Some preliminaries on the IDC-series}

In this section, we present some selected elementary properties of the IDC-series. We begin with the following proposition:

\begin{prop}\label{p1}
Let $f$ be an IDC-series. Then $\frac{1}{f}$ is an IDC-series if and only if $f(0) = \pm 1$.
\end{prop}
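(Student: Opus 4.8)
The plan is to use the convolution formula for the differential coefficients of a product — the one established in the Introduction, namely that if $S_1(t) = \sum a_n \frac{t^n}{n!}$ and $S_2(t) = \sum b_n \frac{t^n}{n!}$ then $S_1(t) S_2(t) = \sum c_n \frac{t^n}{n!}$ with $c_n = \sum_{k=0}^n \binom{n}{k} a_k b_{n-k}$ — together with a straightforward induction. Write $f(t) = \sum_{n \geq 0} a_n \frac{t^n}{n!}$ with $a_n \in \Z$, so that $f(0) = a_0$. For $\frac{1}{f}$ to be a well-defined element of $\Q[[t]]$ at all, one needs $a_0 \neq 0$; assuming this, write $\frac{1}{f}(t) = \sum_{n \geq 0} b_n \frac{t^n}{n!}$ with $b_n \in \Q$. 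Reading the identity $f \cdot \frac{1}{f} = 1$ coefficient by coefficient via the product formula yields $a_0 b_0 = 1$ and $\sum_{k=0}^n \binom{n}{k} a_k b_{n-k} = 0$ for every $n \geq 1$.

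For the ``only if'' direction, suppose $\frac{1}{f}$ is an IDC-series, so in particular $b_0 \in \Z$. Then $a_0 b_0 = 1$ with $a_0, b_0 \in \Z$ forces $a_0 = b_0 = \pm 1$, that is, $f(0) = \pm 1$.

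For the ``if'' direction, suppose $f(0) = a_0 = \pm 1$, so that $a_0^{-1} = a_0 \in \Z$. From $a_0 b_0 = 1$ we get $b_0 = a_0 \in \Z$. For $n \geq 1$, isolating the $k = 0$ term in the relation above gives $a_0 b_n = -\sum_{k=1}^n \binom{n}{k} a_k b_{n-k}$, hence $b_n = -a_0 \sum_{k=1}^n \binom{n}{k} a_k b_{n-k}$. Arguing by strong induction on $n$, the right-hand side is a $\Z$-linear combination of products of integers ($a_0, \dots, a_n \in \Z$, the binomial coefficients are integers, and $b_0, \dots, b_{n-1} \in \Z$ by the induction hypothesis), so $b_n \in \Z$. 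Therefore $\frac{1}{f}$ is an IDC-series.

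I do not expect a serious obstacle here; the only points that deserve a moment's care are observing that $\frac{1}{f}$ is a genuine formal power series exactly when $a_0 \neq 0$ (so that the statement is about the coefficients of an object that already exists), and noticing that inverting $a_0 = \pm 1$ keeps us inside $\Z$ — which is precisely what makes the inductive step close.
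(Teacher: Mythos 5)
Your proof is correct and follows essentially the same route as the paper: read off the relations $a_0 b_0 = 1$ and $\sum_{k=0}^{n}\binom{n}{k} a_k b_{n-k} = 0$ from $f \cdot \frac{1}{f} = 1$, deduce $a_0 = \pm 1$ in one direction, and solve recursively for $b_n$ with an induction in the other. No issues to report.
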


\begin{proof}
Write
$$
f(t) ~=~ \sum_{n = 0}^{+ \infty} a_n \frac{t^n}{n!} ,
$$
where $a_n \in \Z$, $\forall n \in \N$. \\
\textbullet{} If $\frac{1}{f}$ is an IDC-series then we have $(\frac{1}{f})(0) = \frac{1}{f(0)} \in \Z$, which is possible if and only if $f(0) = \pm 1$ (since $f(0) = a_0 \in \Z$). \\
\textbullet{} Conversely, suppose that $f(0) = \pm 1$ (that is $a_0 = \pm 1$) and let us show that $\frac{1}{f}$ is an IDC-series. The fact that $f \in \Q[[t]]$ and $f(0) \neq 0$ implies that $\frac{1}{f} \in \Q[[t]]$; so let
$$
\frac{1}{f(t)} ~=~ \sum_{n = 0}^{+ \infty} b_n \frac{t^n}{n!} ,
$$
where $b_n \in \Q$, $\forall n \in \N$. Thus, we have
$$
\left(\sum_{n = 0}^{+ \infty} a_n \frac{t^n}{n!}\right) \left(\sum_{n = 0}^{+ \infty} b_n \frac{t^n}{n!}\right) ~=~ 1 .
$$
Then, by identifying the differential coefficients in both power series of the last identity, we obtain that:
$$
\left\{\begin{array}{l}
a_0 b_0 ~=~ 1 \\[3mm]
\displaystyle\sum_{k = 0}^{n} \binom{n}{k} a_k b_{n - k} ~=~ 0 ~~~~ (\forall n \geq 1)
\end{array}
\right. .
$$
Hence
$$
\left\{\begin{array}{l}
b_0 ~=~ \frac{1}{a_0} \\[3mm]
b_n ~=~ - \frac{1}{a_0} \left[\binom{n}{1} b_{n - 1} a_1 + \binom{n}{2} b_{n - 2} a_2 + \dots + \binom{n}{n} b_0 a_n\right] ~~~~ (\forall n \geq 1)
\end{array}
\right. ,
$$
showing that $b_0 \in \Z$ (since $a_0 = \pm 1$ by hypothesis) and then (by a simple induction on $n$) that $b_n \in \Z$ for all $n \in \N$. Thus $\frac{1}{f}$ is an IDC-series, as required. Our proof is complete.
\end{proof}

From Proposition \ref{p1}, we derive the following corollary:

\begin{coll}\label{c1}
Let $f(t) = \sum_{n = 0}^{+ \infty} a_n \frac{t^n}{n!}$ be an IDC-series with $a_0 \neq 0$. Then the formal power series $\frac{a_0}{f(a_0 t)}$ is also an IDC-series.
\end{coll}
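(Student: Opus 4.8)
The plan is to reduce the statement to Proposition~\ref{p1} by a suitable normalization. First I would introduce the rescaled series $g(t) := f(a_0 t)$; expanding, one gets $g(t) = \sum_{n = 0}^{+\infty} a_n a_0^n \frac{t^n}{n!}$, so the differential coefficients of $g$ are the integers $a_n a_0^n$, i.e.\ $g$ is an IDC-series. However $g(0) = a_0$ need not equal $\pm 1$, so Proposition~\ref{p1} does not apply to $g$ directly; and the object we actually want is $\frac{a_0}{f(a_0 t)} = \frac{a_0}{g(t)}$, not $\frac{1}{g(t)}$.

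The key idea is therefore to divide by $a_0$ \emph{before} inverting. Set $h(t) := \frac{1}{a_0} f(a_0 t) = \frac{1}{a_0} g(t)$, so that $h(t) = \sum_{n = 0}^{+\infty} a_n a_0^{n - 1} \frac{t^n}{n!}$. I would then check that each differential coefficient $a_n a_0^{n - 1}$ is an integer: for $n = 0$ it equals $a_0 \cdot a_0^{-1} = 1$, and for $n \geq 1$ it is a product of the integer $a_n$ with the nonnegative integer power $a_0^{n - 1}$ of the integer $a_0$. Hence $h$ is an IDC-series with $h(0) = 1 = +1$.

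Now Proposition~\ref{p1} applies to $h$ and yields that $\frac{1}{h}$ is an IDC-series. Since $\frac{1}{h(t)} = \frac{a_0}{g(t)} = \frac{a_0}{f(a_0 t)}$, the corollary follows at once. The only point requiring a little care — the ``main obstacle'', modest as it is — is the bookkeeping of the constant term: one must scale by exactly the factor $a_0$ (which is where the hypothesis $a_0 \neq 0$ is used, so that $h$ is well defined and $h(0) = 1$), while the compensating factors $a_0^{n - 1}$ for $n \geq 1$, being nonnegative integer powers, cause no denominators.
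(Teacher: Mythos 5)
Your proof is correct and follows essentially the same route as the paper: rescale to $h(t) = \frac{1}{a_0} f(a_0 t) = 1 + \sum_{n \geq 1} a_n a_0^{n-1} \frac{t^n}{n!}$, note this is an IDC-series with constant term $1$, and apply Proposition~\ref{p1} to conclude that $\frac{1}{h} = \frac{a_0}{f(a_0 t)}$ is an IDC-series.
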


\begin{proof}
We have
$$
\frac{f(a_0 t)}{a_0} ~=~ \frac{1}{a_0} \sum_{n = 0}^{+ \infty} a_n \frac{(a_0 t)^n}{n!} ~=~ 1 + \sum_{n = 1}^{+ \infty} a_n a_0^{n - 1} \frac{t^n}{n!} ,
$$
showing that $\frac{f(a_0 t)}{a_0}$ is an IDC-series with the first coefficient equal to $1$. According to Proposition \ref{p1}, it follows that $\frac{1}{f(a_0 t) / a_0} = \frac{a_0}{f(a_0 t)}$ is also an IDC-series. This achieves the proof.
\end{proof}

Finally, from Corollary \ref{c1}, we derive the following corollary which is essential for our purpose.

\begin{coll}\label{c2}
Let $f(t) = \sum_{n = 0}^{+ \infty} a_n \frac{t^n}{n!}$ be an IDC-series with $a_0 \neq 0$ and let $\frac{1}{f(t)} = \sum_{n = 0}^{+ \infty} b_n \frac{t^n}{n!} \in \Q[[t]]$ be the reciprocal of the formal power series $f$. Then, for all $n \in \N$, the denominator of the rational number $b_n$ divides the integer $a_0^{n + 1}$.
\end{coll}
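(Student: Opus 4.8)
The plan is to reduce the statement directly to Corollary \ref{c1}, after a simple rescaling of the variable. Starting from the given expansion $\frac{1}{f(t)} = \sum_{n = 0}^{+\infty} b_n \frac{t^n}{n!}$, I would substitute $a_0 t$ for $t$ (a legitimate operation in $\Q[[t]]$, and one that commutes with taking reciprocals, since $f(t) \cdot \frac{1}{f(t)} = 1$ forces $f(a_0 t) \cdot \frac{1}{f(a_0 t)} = 1$). This yields
$$
\frac{1}{f(a_0 t)} ~=~ \sum_{n = 0}^{+\infty} b_n a_0^n \frac{t^n}{n!} ,
$$
and after multiplying by $a_0$,
$$
\frac{a_0}{f(a_0 t)} ~=~ \sum_{n = 0}^{+\infty} b_n a_0^{n + 1} \frac{t^n}{n!} .
$$

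Next I would invoke Corollary \ref{c1}: its hypotheses (namely that $f$ is an IDC-series with $a_0 = f(0) \neq 0$) are exactly those assumed here, so it tells us that $\frac{a_0}{f(a_0 t)}$ is an IDC-series. Reading off the differential coefficients from the display above, this means precisely that $b_n a_0^{n + 1} \in \Z$ for every $n \in \N$.

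Finally, I would conclude with the elementary property of denominators: since $a_0^{n + 1}$ is a positive integer (up to sign; one may replace $a_0$ by $|a_0|$ without loss) with $a_0^{n + 1} b_n \in \Z$, and since $\den(b_n)$ is by definition the least positive integer $d$ with $d b_n \in \Z$, any such integer multiplier is a multiple of $\den(b_n)$; hence $\den(b_n) \mid a_0^{n + 1}$, which is the assertion.

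I do not anticipate a genuine obstacle here — the whole content is the rescaling trick together with Corollary \ref{c1}. The only point requiring a moment's care is the bookkeeping with the substitution $t \mapsto a_0 t$ and the observation that it transforms the sequence $(b_n)$ of differential coefficients of $\frac{1}{f}$ into $(a_0^{n+1} b_n)$ for $\frac{a_0}{f(a_0 t)}$; once this is in place, the result is immediate.
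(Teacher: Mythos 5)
Your proof is correct and follows essentially the same route as the paper: rescale $t \mapsto a_0 t$ in the expansion of $\frac{1}{f}$, multiply by $a_0$, apply Corollary \ref{c1} to conclude $b_n a_0^{n+1} \in \Z$, and read off the divisibility of the denominator. The extra remarks (substitution commuting with reciprocals, sign of $a_0$) are harmless refinements of the same argument.
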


\begin{proof}
From $\frac{1}{f(t)} = \sum_{n = 0}^{+ \infty} b_n \frac{t^n}{n!}$, we derive that:
$$
\frac{a_0}{f(a_0 t)} ~=~ a_0 \sum_{n = 0}^{+ \infty} b_n \frac{(a_0 t)^n}{n!} ~=~ \sum_{n = 0}^{+ \infty} b_n a_0^{n + 1} \frac{t^n}{n!} .
$$
But, according to Corollary \ref{c1}, we know that $\frac{a_0}{f(a_0 t)}$ is an IDC-series; equivalently, we have that $b_n a_0^{n + 1} \in \Z$ ($\forall n \in \N$). Consequently, the denominator of each of the rational numbers $b_n$ ($n \in \N$) is a divisor of $a_0^{n + 1}$, as required. The corollary is proved.
\end{proof}

\section{The main result}

Our main result is the following:

\begin{thm}\label{t1}
Let $a \geq 2$ be an integer. Then for any positive integer $n$, the number $G_{n , a}$ is an integer.
\end{thm}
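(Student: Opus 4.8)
The plan is to prove the equivalent statement that $\vartheta_p(G_{n,a}) \geq 0$ for every prime $p$ (then $G_{n,a}$, being a rational number, must be an integer), and to handle the primes dividing $a$ and those not dividing $a$ by two completely different devices prepared above.

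I would first turn the generating function into something manageable. Since $e^{(a - 1)t} + \dots + e^t + 1 = \frac{e^{at} - 1}{e^t - 1}$ (a valid identity in $\Q[[t]]$), one has
$$
\sum_{n = 0}^{+ \infty} G_{n,a} \frac{t^n}{n!} ~=~ \frac{a t (e^t - 1)}{e^{at} - 1} ~=~ (e^t - 1) \sum_{k = 0}^{+ \infty} B_k a^k \frac{t^k}{k!} ,
$$
using $\frac{at}{e^{at} - 1} = \sum_{k} B_k \frac{(at)^k}{k!}$. Expanding this Cauchy product (the $t^0$-differential-coefficient of $e^t - 1$ is $0$, all the others are $1$) yields $G_{0,a} = 0$ and the explicit formula
$$
G_{n,a} ~=~ \sum_{k = 0}^{n - 1} \binom{n}{k} a^k B_k \qquad (\forall n \geq 1) .
$$

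Now fix a prime $p$. If $p \mid a$, I would argue summand by summand: by the von Staudt--Clausen theorem the denominator of every Bernoulli number is squarefree, so $\vartheta_p(B_k) \geq - 1$ for all $k$; hence for $1 \leq k \leq n - 1$ one has $\vartheta_p\big(\binom{n}{k} a^k B_k\big) = \vartheta_p\binom{n}{k} + k \vartheta_p(a) + \vartheta_p(B_k) \geq k - 1 \geq 0$, while the $k = 0$ term equals $1$; thus $G_{n,a}$ is a sum of $p$-integers and $\vartheta_p(G_{n,a}) \geq 0$. If instead $p \nmid a$, I would invoke the preliminaries: $f(t) := \sum_{j = 0}^{a - 1} e^{jt} = \sum_{n = 0}^{+ \infty} \big(\sum_{j = 0}^{a - 1} j^n\big) \frac{t^n}{n!}$ is an IDC-series whose first differential coefficient is $a_0 = a \neq 0$, so by Corollary \ref{c2} the series $\frac{1}{f(t)} = \sum_{m} b_m \frac{t^m}{m!}$ satisfies $\den(b_m) \mid a^{m + 1}$, whence $\vartheta_p(b_m) \geq 0$. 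Comparing coefficients in $\sum_n G_{n,a} \frac{t^n}{n!} = a t \cdot \frac{1}{f(t)}$ gives $G_{n,a} = a n b_{n - 1}$ for $n \geq 1$, so $\vartheta_p(G_{n,a}) = \vartheta_p(a) + \vartheta_p(n) + \vartheta_p(b_{n - 1}) = \vartheta_p(n) + \vartheta_p(b_{n - 1}) \geq 0$. This exhausts all primes, so $G_{n,a} \in \Z$.

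The point I expect to be delicate is the case $p \mid a$: one must notice that the factor $a^k$ carries at least $k$ powers of $p$, exactly enough to absorb the single power of $p$ that von Staudt--Clausen permits in the denominator of $B_k$, so that $G_{n,a}$ is $p$-integral \emph{term by term} — no cancellation, and in particular no Fermat-type congruence, is needed there. Cancellation is genuinely required only for $p \nmid a$, and there it is delivered cleanly and for free by the substitution trick encoded in Corollary \ref{c2}, bypassing the Fermat little theorem used in the classical proof for $a = 2$. The only routine care needed is with the endpoints $k = 0$ and $k = n$ of the sum and with the convention $0^0 = 1$ hidden in $\sum_{j = 0}^{a - 1} j^0 = a$.
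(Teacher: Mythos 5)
Your proof is correct, and it is half the paper's proof and half a different one. For primes $p \nmid a$ you argue exactly as the paper does (its Proposition \ref{p2} and Corollary \ref{c3}): apply Corollary \ref{c2} to $f(t)=\sum_{j=0}^{a-1}e^{jt}$, get $\den(b_{n-1})\mid a^{n}$ and $G_{n,a}=anb_{n-1}$, hence $p$-integrality. For primes $p \mid a$ you diverge: you first derive the closed formula $G_{n,a}=\sum_{k=0}^{n-1}\binom{n}{k}a^{k}B_{k}$ (which is precisely the paper's later Proposition \ref{p4} evaluated at $X=a$) and then invoke the von Staudt--Clausen theorem, $\vartheta_p(B_k)\geq -1$, to get term-by-term $p$-integrality since $a^k$ contributes at least $k\geq 1$ powers of $p$; your valuation count is right, including the endpoint terms $k=0$ (equal to $1$) and $k=1$ (where $\vartheta_p(B_1)=-1$ is exactly absorbed). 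The paper instead handles $p\mid a$ with the convolution identity $G_{n,a}+\sum_{1\leq k\leq n-1}\binom{n}{k}\frac{a^k}{k+1}G_{n-k,a}=1$ (Proposition \ref{p3}), the elementary estimate $\vartheta_p\bigl(\frac{a^k}{k+1}\bigr)\geq 0$ (Lemma \ref{l1}), and induction on $n$. The trade-off: your route is shorter and non-inductive, and it makes the link with the Bernoulli numbers explicit from the start, but it imports von Staudt--Clausen as a black box; the paper's route is entirely self-contained and elementary (only $k+1\leq p^k$ is needed), which matters to the author since part of the point is a proof of Genocchi-type integrality that bypasses von Staudt--Clausen and Fermat's little theorem. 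You correctly note that no Fermat-type congruence is needed in either branch, and that a purely term-by-term argument could not work for $p\nmid a$, which is exactly why the Corollary \ref{c2} device is used there.
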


If we take $a = 2$ in Theorem \ref{t1}, we obtain the Genocchi original theorem.

The method of proving Theorem \ref{t1} consists to show that for any prime number $p$, we have $\vartheta_p(G_{n , a}) \geq 0$ ($a \geq 2$, $n \in \N^*$). To do so, we distinguish two cases according as $p$ does or does not divide $a$. We begin with the following proposition:

\begin{prop}\label{p2}
Let $a$ and $n$ be two positive integers with $a \geq 2$. Then the denominator of the rational number $G_{n , a}$ divides $a^{n - 1}$.
\end{prop}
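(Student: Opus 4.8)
The plan is to recognize $G_{n,a}$ as (essentially) a coefficient of the reciprocal of a well-chosen IDC-series and then to invoke Corollary~\ref{c2}. First I would introduce
\[
D(t) ~:=~ e^{(a-1)t} + e^{(a-2)t} + \dots + e^t + 1 ~=~ \sum_{j=0}^{a-1} e^{jt} ,
\]
so that, by definition of the $G_{n,a}$'s, $\sum_{n=0}^{\infty} G_{n,a}\,\frac{t^n}{n!} = \frac{a t}{D(t)}$. Each $e^{jt} = \sum_{k\geq 0} j^k \frac{t^k}{k!}$ is an IDC-series (its differential coefficients are the integers $j^k$), and a finite sum of IDC-series is an IDC-series; hence $D$ is an IDC-series, with constant term $D(0) = a \neq 0$.

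Next I would apply Corollary~\ref{c2} to $f = D$ (so that $a_0 = a$): writing $\frac{1}{D(t)} = \sum_{n=0}^{\infty} b_n\,\frac{t^n}{n!}$, we obtain $a^{n+1} b_n \in \Z$ for every $n \in \N$. It then remains to relate the $G_{n,a}$'s to the $b_n$'s, which I would do simply by multiplying the last identity by $a t$:
\[
\sum_{n=0}^{\infty} G_{n,a}\,\frac{t^n}{n!} ~=~ \frac{a t}{D(t)} ~=~ a \sum_{n=0}^{\infty} b_n\,\frac{t^{n+1}}{n!} ~=~ \sum_{n=1}^{\infty} a\,n\,b_{n-1}\,\frac{t^n}{n!} .
\]
Comparing differential coefficients yields $G_{n,a} = a\,n\,b_{n-1}$ for every $n \geq 1$ (and $G_{0,a} = 0$).

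Finally, for $n \geq 1$ I would write $a^{n-1} G_{n,a} = a^{n-1}\cdot a\,n\,b_{n-1} = n\,(a^{n} b_{n-1})$, which lies in $\Z$ since $a^{n} b_{n-1} \in \Z$ by the step above. As recalled in Section~1, the statement $a^{n-1} G_{n,a} \in \Z$ is exactly the assertion that $\den(G_{n,a})$ divides $a^{n-1}$, so this completes the argument. I do not anticipate a genuine obstacle: the proof is short, and the only point requiring care is the bookkeeping of the powers of $a$ — it is the factor $t$ (rather than a mere constant) in front of $\frac{1}{D(t)}$ that improves the exponent from $a^{n}$, the bound Corollary~\ref{c2} gives directly, down to $a^{n-1}$; one should also make sure that the constant term of $D$ really equals $a$, so that Corollary~\ref{c2} applies.
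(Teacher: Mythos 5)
Your proposal is correct and follows essentially the same route as the paper: apply Corollary \ref{c2} to the IDC-series $f(t)=e^{(a-1)t}+\dots+e^t+1$ (whose constant term is $a$), multiply by $at$ to get $G_{n,a}=a\,n\,b_{n-1}$, and conclude $\den(G_{n,a})\mid a^{n-1}$ from $\den(b_{n-1})\mid a^{n}$. No gaps.
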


\begin{proof}
By applying Corollary \ref{c2} for the IDC-series
$$
f(t) ~:=~ e^{(a - 1) t} + e^{(a - 2) t} + \dots + e^t + 1 ~=~ a + \sum_{n = 1}^{+ \infty} \left(1^n + 2^n + \dots + (a - 1)^n\right) \frac{t^n}{n!} ,
$$
we obtain that the expansion of $\frac{1}{f}$ into a power series has the form
\begin{equation}\label{eq1}
\frac{1}{f(t)} ~=~ \sum_{n = 0}^{+ \infty} b_n \frac{t^n}{n!} ,
\end{equation}
where, for all $n \in \N$, we have $b_n \in \Q$ and $\den(b_n) \mid a^{n + 1}$. Then, by multiplying the two sides of \eqref{eq1} by $a t$, we get
$$
\frac{a t}{f(t)} ~=~ \sum_{n = 0}^{+ \infty} a b_n \frac{t^{n + 1}}{n!} ~=~ \sum_{n = 1}^{+ \infty} a b_{n - 1} \frac{t^n}{(n - 1)!} ~=~ \sum_{n = 1}^{+ \infty} a n b_{n - 1} \frac{t^n}{n!} .
$$
But since we have on the other hand $\frac{a t}{f(t)} = \sum_{n = 0}^{+ \infty} G_{n , a} \frac{t^n}{n!}$, we deduce that
$$
G_{n , a} ~=~ a n b_{n - 1} ~~~~~~~~~~ (\forall n \in \N^*) .
$$
Now, for a given $n \in \N^*$, we have that $\den(b_{n - 1}) \mid a^n$; thus $\den(a n b_{n - 1}) \mid a^{n - 1}$; that is $\den(G_{n , a}) \mid a^{n - 1}$. This completes the proof.
\end{proof}

From Proposition \ref{p2}, we immediately derive the following corollary:

\begin{coll}\label{c3}
Let $a$ and $n$ be two positive integers with $a \geq 2$. Then for any prime number $p$ not dividing $a$, we have
\begin{equation}
\vartheta_p\left(G_{n , a}\right) ~\geq~ 0 . \tag*{$\square$}
\end{equation}
\end{coll}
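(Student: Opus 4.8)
The plan is to deduce this immediately from Proposition \ref{p2}. That proposition tells us that for positive integers $a \geq 2$ and $n$, the denominator $\den(G_{n,a})$ divides $a^{n-1}$. So the only work is to translate a statement about denominators into one about $p$-adic valuations.

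First I would fix a prime $p$ with $p \nmid a$. Since $p$ is prime and does not divide $a$, it does not divide any power of $a$; in particular $p \nmid a^{n-1}$. Because $\den(G_{n,a})$ is a divisor of $a^{n-1}$, it follows that $p \nmid \den(G_{n,a})$.

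Then I would recall that, by the very definition of $\den$, the rational number $G_{n,a}$ can be written with denominator $\den(G_{n,a})$, i.e. $\den(G_{n,a}) \cdot G_{n,a} \in \Z$; equivalently, for every prime $q$ one has $\vartheta_q(G_{n,a}) \geq -\vartheta_q\big(\den(G_{n,a})\big)$. Applying this with $q = p$ and using $\vartheta_p\big(\den(G_{n,a})\big) = 0$ (which is exactly what $p \nmid \den(G_{n,a})$ says), we get $\vartheta_p(G_{n,a}) \geq 0$, which is the desired conclusion. There is no genuine obstacle here: the content is entirely in Proposition \ref{p2}, and this corollary is just the elementary observation that a prime not dividing $a^{n-1}$ cannot appear in the denominator of a rational number whose denominator divides $a^{n-1}$.
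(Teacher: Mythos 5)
Your proof is correct and follows exactly the route the paper intends: the paper leaves this corollary as an immediate consequence of Proposition \ref{p2}, and your translation from $\den(G_{n,a}) \mid a^{n-1}$ with $p \nmid a$ to $\vartheta_p(G_{n,a}) \geq 0$ is precisely that omitted verification.
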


Now, we are going to establish the analog of Corollary \ref{c3} for the prime numbers $p$ dividing the considered number $a$. For this purpose, we first need the following proposition:

\begin{prop}\label{p3}
Let $a \geq 2$ be an integer. Then for all positive integer $n$, we have
$$
G_{n , a} + \sum_{1 \leq k \leq n - 1} \binom{n}{k} \frac{a^k}{k + 1} G_{n - k , a} ~=~ 1 .
$$
\end{prop}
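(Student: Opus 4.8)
The plan is to exploit the closed form of the denominator appearing in the generating function of the $G_{n,a}$'s. First I would record the geometric‑sum identity
$$e^{(a-1)t} + e^{(a-2)t} + \dots + e^t + 1 ~=~ \frac{e^{at}-1}{e^t-1}$$
(the geometric sum of ratio $e^t$, coming from $(x-1)(1 + x + \dots + x^{a-1}) = x^a - 1$). Writing $\G_a(t) := \sum_{n=0}^{+\infty} G_{n,a}\,\frac{t^n}{n!}$, the definition of the $G_{n,a}$'s then becomes $\G_a(t) = \dfrac{a t\,(e^t - 1)}{e^{at}-1}$, that is,
$$\G_a(t)\cdot\frac{e^{at}-1}{at} ~=~ e^t - 1 ,$$
where $\frac{e^{at}-1}{at}$ denotes the element of $\Q[[t]]$ with constant term $1$ obtained by formal division (clearing the denominator $at$ is legitimate since both $\G_a(t)(e^{at}-1)$ and $at(e^t-1)$ are divisible by $t$ in $\Q[[t]]$).

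Next I would expand both sides of this identity in the basis $\big(\frac{t^n}{n!}\big)_{n\in\N}$. On the left, the factor
$$\frac{e^{at}-1}{at} ~=~ \sum_{m=0}^{+\infty}\frac{(at)^m}{(m+1)!} ~=~ \sum_{m=0}^{+\infty}\frac{a^m}{m+1}\cdot\frac{t^m}{m!}$$
has differential coefficients $\frac{a^m}{m+1}$, while on the right $e^t - 1$ has differential coefficient $0$ at index $0$ and $1$ at every index $\ge 1$. Applying the product rule for differential coefficients computed in the Introduction and reading off the coefficient of $\frac{t^n}{n!}$ for a fixed $n\in\N^*$, I obtain
$$\sum_{k=0}^{n}\binom{n}{k}\,G_{k,a}\,\frac{a^{\,n-k}}{n-k+1} ~=~ 1 .$$

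It remains to bring this into the stated form. Since the numerator $at$ of $\G_a$ vanishes at $t = 0$, one has $G_{0,a} = 0$, so the term $k = 0$ drops out; the substitution $j := n - k$, followed by renaming $j$ back to $k$, turns the left-hand side into $G_{n,a} + \sum_{1 \le k \le n-1}\binom{n}{k}\frac{a^k}{k+1}G_{n-k,a}$, which is exactly the quantity in the statement. I do not expect a genuine obstacle: the only points needing a little care are the justification of the formal division by $at$ in $\Q[[t]]$ and the bookkeeping in the reindexing. The one real idea is the rearrangement in the first step, which is precisely what converts the defining generating function into a convolution carrying the weights $\binom{n}{k}\frac{a^k}{k+1}$.
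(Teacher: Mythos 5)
Your proposal is correct and is essentially the paper's own argument: multiply the generating function $\sum_n G_{n,a}\frac{t^n}{n!}$ by $\frac{e^{at}-1}{at}=\sum_m \frac{a^m}{m+1}\frac{t^m}{m!}$, observe the product collapses to $e^t-1$ via the geometric-sum identity, and identify the $n$\up{th} differential coefficients, using $G_{0,a}=0$ to drop the extra term. The only cosmetic difference is your index bookkeeping (summing over $G_{k,a}\frac{a^{n-k}}{n-k+1}$ and then reindexing), which matches the paper's convolution after the substitution $k\mapsto n-k$.
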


\begin{proof}
From the definition of the numbers $G_{n , a}$, we have
$$
\left(\sum_{n = 0}^{+ \infty} G_{n , a} \frac{t^n}{n!}\right) \left(\sum_{n = 0}^{+ \infty} \frac{a^n}{n + 1} \frac{t^n}{n!}\right) = \frac{a t}{e^{(a - 1) t} + e^{(a - 2) t} + \dots + e^t + 1} \cdot \frac{e^{a t} - 1}{a t} ~=~ e^t - 1 ~=~ \sum_{n = 1}^{+ \infty} \frac{t^n}{n!} ;
$$
that is
\begin{equation}\label{eq2}
\left(\sum_{n = 0}^{+ \infty} G_{n , a} \frac{t^n}{n!}\right) \left(\sum_{n = 0}^{+ \infty} \frac{a^n}{n + 1} \, \frac{t^n}{n!}\right) ~=~ \sum_{n = 1}^{+ \infty} \frac{t^n}{n!} .
\end{equation}
So, for a given $n \in \N^*$, the identification of the $n$\up{th} differential coefficients in the two hand-sides of \eqref{eq2} gives
$$
\sum_{k = 0}^{n} \binom{n}{k} \frac{a^k}{k + 1} G_{n - k , a} ~=~ 1 ,
$$
which is nothing else the required identity (since $G_{0 , a} = 0$).
\end{proof}

Next, we have the following lemma:

\begin{lemma}\label{l1}
Let $a \geq 2$ be an integer. Then for all prime number $p$ dividing $a$ and all natural number $k$, we have
$$
\vartheta_p\left(\frac{a^k}{k + 1}\right) ~\geq~ 0 .
$$
\end{lemma}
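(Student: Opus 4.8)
The plan is to compute the $p$-adic valuation directly. Since $p$ is prime and $p \mid a$, we have $\vartheta_p(a) \geq 1$, and by additivity of the valuation,
$$
\vartheta_p\!\left(\frac{a^k}{k+1}\right) ~=~ k\,\vartheta_p(a) - \vartheta_p(k+1) ~\geq~ k - \vartheta_p(k+1).
$$
So everything reduces to the purely elementary inequality $\vartheta_p(k+1) \leq k$, valid for every natural number $k$ and every prime $p \geq 2$.

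To establish that inequality, I would argue that $p^{\vartheta_p(k+1)}$ divides $k+1$, hence $p^{\vartheta_p(k+1)} \leq k+1$. Then I would chain the elementary bounds
$$
p^{\vartheta_p(k+1)} ~\leq~ k+1 ~\leq~ 2^k ~\leq~ p^k,
$$
where the middle inequality $k+1 \leq 2^k$ holds for all $k \in \N$ (immediate for $k = 0, 1$, and by a trivial induction afterwards) and the last uses $p \geq 2$. Since $p > 1$, comparing exponents gives $\vartheta_p(k+1) \leq k$, which is exactly what is needed. (The case $k = 0$ is in any event obvious, as $\vartheta_p(1) = 0$.)

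There is no real obstacle here: the only non-formal ingredient is the inequality $k + 1 \leq 2^k$, which is entirely routine. The one point worth stating carefully is why $p \mid a$ is used — it is precisely to guarantee $\vartheta_p(a) \geq 1$, so that the factor $a^k$ contributes at least $k$ to the valuation, enough to absorb the denominator $k+1$.
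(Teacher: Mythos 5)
Your proposal is correct and follows essentially the same route as the paper: reduce to the inequality $\vartheta_p(k+1) \leq k$ via additivity of the valuation and $\vartheta_p(a) \geq 1$, then justify that inequality by the chain $k+1 \leq 2^k \leq p^k$. Your write-up merely spells out the step $p^{\vartheta_p(k+1)} \leq k+1$ a bit more explicitly than the paper does.
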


\begin{proof}
Let $p$ be a prime number dividing $a$ (so $\vartheta_p(a) \geq 1$) and $k$ be a natural number. Since $k + 1 \leq 2^k \leq p^k$ then we have $\vartheta_p(k + 1) \leq k$. Hence
$$
\vartheta_p\left(\frac{a^k}{k + 1}\right) ~=~ k \vartheta_p(a) - \vartheta_p(k + 1) ~\geq~ k - \vartheta_p(k + 1) ~\geq~ 0 ,
$$
as required.
\end{proof}

From Proposition \ref{p3} and Lemma \ref{l1}, we derive the following corollary:

\begin{coll}\label{c4}
Let $a$ and $n$ be two positive integers with $a \geq 2$. Then for any prime number $p$ dividing $a$, we have
$$
\vartheta_p\left(G_{n , a}\right) ~\geq~ 0 .
$$
\end{coll}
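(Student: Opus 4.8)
The plan is to run an induction on $n$ using the recurrence from Proposition~\ref{p3}, combined with the valuation estimate of Lemma~\ref{l1}. Fix a prime $p$ dividing $a$. The base case $n = 1$ is immediate: Proposition~\ref{p3} reduces to $G_{1,a} = 1$ (the sum over $1 \leq k \leq 0$ being empty), so $\vartheta_p(G_{1,a}) = 0 \geq 0$. For the inductive step, suppose that $\vartheta_p(G_{m,a}) \geq 0$ for all positive integers $m < n$; we must show $\vartheta_p(G_{n,a}) \geq 0$.

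First I would isolate $G_{n,a}$ in the identity of Proposition~\ref{p3}, writing
$$
G_{n,a} ~=~ 1 - \sum_{1 \leq k \leq n - 1} \binom{n}{k} \frac{a^k}{k+1} G_{n-k,a} .
$$
Then I would bound the $p$-adic valuation of the right-hand side term by term. For each $k$ with $1 \leq k \leq n-1$, the binomial coefficient $\binom{n}{k}$ is an integer, so $\vartheta_p\!\big(\binom{n}{k}\big) \geq 0$; Lemma~\ref{l1} gives $\vartheta_p\!\big(\frac{a^k}{k+1}\big) \geq 0$; and the induction hypothesis (applied to $m = n - k$, which satisfies $1 \leq m \leq n-1$) gives $\vartheta_p(G_{n-k,a}) \geq 0$. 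Hence each summand is a $p$-integer, and so is the whole sum, and so is $1$ minus that sum. Since $\vartheta_p$ of a sum of $p$-integers is $\geq 0$, we conclude $\vartheta_p(G_{n,a}) \geq 0$, completing the induction.

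There is no real obstacle here: the main content has already been extracted into Proposition~\ref{p3} and Lemma~\ref{l1}, and what remains is the bookkeeping of the induction. The one point worth stating carefully is that the sum ranges over $1 \leq k \leq n-1$, which keeps the index $n - k$ strictly between $0$ and $n$, so the induction hypothesis genuinely applies to every term and we never need information about $G_{n,a}$ itself on the right-hand side. Combining this corollary with Corollary~\ref{c3}, which handles the primes $p \nmid a$, we obtain $\vartheta_p(G_{n,a}) \geq 0$ for every prime $p$, hence $G_{n,a} \in \Z$, which is Theorem~\ref{t1}.
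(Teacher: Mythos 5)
Your proposal is correct and follows essentially the same route as the paper: induction on $n$ via the identity of Proposition~\ref{p3}, with Lemma~\ref{l1} and the integrality of binomial coefficients guaranteeing that each term on the right-hand side is a $p$-integer. Deriving the base case $G_{1,a}=1$ from the empty sum in Proposition~\ref{p3} is a harmless minor variation.
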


\begin{proof}
Let $p$ be a prime number dividing $a$. To prove that $\vartheta_p(G_{n , a}) \geq 0$, we argue by induction on $n \in \N^*$ and use the identity of Proposition \ref{p3} together with Lemma \ref{l1}. \\
\textbullet{} For $n = 1$, we have $G_{1 , a} = 1$, so $\vartheta_p(G_{1 , a}) = 0 \geq 0$. \\
\textbullet{} Let $n \geq 2$ be an integer. Suppose that $\vartheta_p(G_{m , a}) \geq 0$ for any positive integer $m < n$ and show that $\vartheta_p(G_{n , a}) \geq 0$. By Proposition \ref{p3}, we have
$$
G_{n , a} ~=~ - \sum_{1 \leq k \leq n - 1} \binom{n}{k} \frac{a^k}{k + 1} G_{n - k , a} + 1 .
$$
Since the binomial coefficients are known to be integers, the numbers $G_{n - k , a}$ ($1 \leq k \leq n - 1$) are $p$-integers (by the induction hypothesis) and the numbers $\frac{a^k}{k + 1}$ ($1 \leq k \leq n - 1$) are $p$-integers (by Lemma \ref{l1}) then the sum $- \sum_{1 \leq k \leq n - 1} \binom{n}{k} \frac{a^k}{k + 1} G_{n - k , a} + 1$ is a $p$-integer; that is $\vartheta_p(G_{n , a}) \geq 0$, as required. This achieves the induction, and hence, the proof. 
\end{proof}

The proof of our main result is now immediate:

\begin{proof}[Proof of Theorem \ref{t1}]
Let $a$ and $n$ be two positive integers with $a \geq 2$. According to Corollaries \ref{c3} and \ref{c4}, we have for any prime number $p$: $\vartheta_p(G_{n , a}) \geq 0$. Thus the number $G_{n , a}$ is an integer. Our main result is proved.
\end{proof}

\section{Some consequences of the main result}

For the following, we extend the numbers $G_{n , a}$ to non-integer values of $a$. Precisely, we define $\G_n(x)$ ($n \in \N$, $x \in \R$) as the coefficients occurring on the right-hand side of the identity:
$$
\frac{e^{x t}}{e^{x t} - 1} (e^t - 1) ~=~ \sum_{n = 0}^{+ \infty} \G_n(x) \frac{t^n}{n!} ,
$$
where it is understood that $\G_n(0) = \displaystyle\lim_{x \rightarrow 0} \G_n(x) = \begin{cases}
0 & \text{if } n = 0 \\
1 & \text{otherwise}
\end{cases}$ (because $\displaystyle\lim_{x \rightarrow 0} \frac{e^{x t}}{e^{x t} - 1} (e^t - 1) = e^t - 1$). Then it is immediate that $\G_n(a) = G_{n , a}$ if $n \in \N$ and $a$ is an integer $\geq 2$. The following proposition shows that for any $n \in \N$, the function $x \mapsto \G_n(x)$ is actually a polynomial which depends on the Bernoulli polynomial $B_n(X)$.

\begin{prop}\label{p4}
For all natural number $n$, we have
$$
\G_n(X) ~=~ \left(B_n(X) - B_n\right)^* ~=~ \sum_{k = 0}^{n - 1} \binom{n}{k} B_k X^k .
$$
So $\G_n$ ($n \in \N$) is a polynomial with degree $\leq n - 1$.
\end{prop}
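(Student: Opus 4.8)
The plan is to compute the numbers $\G_n(x)$ explicitly, as a polynomial in $x$, by expanding the defining generating series, and then to recognise that polynomial as the reciprocal of $B_n(X)-B_n$. Write $g(x,t):=\dfrac{xt}{e^{xt}-1}\,(e^t-1)$ for the series whose differential coefficients are the $\G_n(x)$ (so that $g(a,t)=\dfrac{at}{e^{(a-1)t}+\cdots+e^t+1}$ for every integer $a\ge 2$, in accordance with $\G_n(a)=G_{n,a}$). From $\dfrac{t}{e^t-1}=\sum_{n\ge 0}B_n\dfrac{t^n}{n!}$ (the defining series of the Bernoulli polynomials evaluated at $X=0$), the substitution $t\mapsto xt$ gives $\dfrac{xt}{e^{xt}-1}=\sum_{k\ge 0}B_k x^k\dfrac{t^k}{k!}$; on the other hand $e^t-1=\sum_{m\ge 0}\beta_m\dfrac{t^m}{m!}$ with $\beta_0=0$ and $\beta_m=1$ for $m\ge 1$. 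Multiplying these two series by the Cauchy-product rule for exponential generating functions recalled above, the coefficient of $\dfrac{t^n}{n!}$ in $g(x,t)$ equals $\sum_{k=0}^{n}\binom{n}{k}B_k x^k\beta_{n-k}=\sum_{k=0}^{n-1}\binom{n}{k}B_k x^k$, the term $k=n$ dropping out because $\beta_0=0$. Hence $\G_n(x)=\sum_{k=0}^{n-1}\binom{n}{k}B_k x^k$ for every $x$, which in particular exhibits $x\mapsto\G_n(x)$ as a polynomial of degree $\le n-1$.

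It then remains to check the equality $\sum_{k=0}^{n-1}\binom{n}{k}B_k X^k=\bigl(B_n(X)-B_n\bigr)^*$. For this I would use the classical expansion $B_n(X)=\sum_{j=0}^{n}\binom{n}{j}B_j X^{n-j}$, which follows at once by writing $\dfrac{te^{Xt}}{e^t-1}=e^{Xt}\cdot\dfrac{t}{e^t-1}$ and taking the Cauchy product. Subtracting $B_n=B_n(0)$ removes exactly the $j=n$ term, so $B_n(X)-B_n=\sum_{j=0}^{n-1}\binom{n}{j}B_j X^{n-j}$, a polynomial of degree exactly $n$ (its leading coefficient, coming from $j=0$, is $\binom{n}{0}B_0=1$) whose constant term is $0$. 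Reversing the order of its $n+1$ coefficients — equivalently, applying $P^*(X)=X^{\deg P}P(1/X)$ with $\deg\bigl(B_n(X)-B_n\bigr)=n$ — sends the coefficient $\binom{n}{j}B_j$ of $X^{n-j}$ to the coefficient of $X^{j}$, and sends the vanishing constant term to a vanishing coefficient of $X^n$; this produces precisely $\sum_{j=0}^{n-1}\binom{n}{j}B_j X^{j}$, a polynomial of degree $\le n-1$, as wanted.

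There is essentially no obstacle here: the whole argument is a bookkeeping of coefficients together with the Cauchy-product formula already available in the paper. The single point that genuinely deserves care is the handling of the reciprocal operation — one must reverse the full list of $n+1$ coefficients of $B_n(X)-B_n$, in particular not discard its constant coefficient $0$, so that the monomial $X^n$ really disappears and $\G_n$ drops to degree $\le n-1$; this is also where the vanishing of $\beta_0$ from the first step is reflected. As an alternative that avoids reading off the explicit coefficients, one can stay entirely within generating functions: since $\deg\bigl(B_n(X)-B_n\bigr)=n$ one has $\bigl(B_n(X)-B_n\bigr)^*=X^n\bigl(B_n(1/X)-B_n\bigr)$, whence $\sum_{n\ge 0}\bigl(B_n(X)-B_n\bigr)^*\dfrac{t^n}{n!}=\sum_{n\ge 0}\bigl(B_n(1/X)-B_n\bigr)\dfrac{(Xt)^n}{n!}=\dfrac{Xt\,(e^{t}-1)}{e^{Xt}-1}=g(X,t)$, using $\sum_{n\ge 0}\bigl(B_n(Z)-B_n\bigr)\dfrac{s^n}{n!}=\dfrac{s(e^{Zs}-1)}{e^s-1}$ with $Z=1/X$ and $s=Xt$; comparing differential coefficients gives $\bigl(B_n(X)-B_n\bigr)^*=\G_n(X)$ directly, and the explicit formula is then obtained as above.
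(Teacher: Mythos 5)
Your proof is correct, but its main line is organized differently from the paper's. The paper's key move is to substitute $X \mapsto \frac{1}{X}$ and $t \mapsto Xt$ in the generating function $\frac{t e^{Xt}}{e^t - 1} = \sum_{n \geq 0} B_n(X) \frac{t^n}{n!}$, which directly produces $\sum_{n \geq 0} B_n^*(X) \frac{t^n}{n!}$; subtracting the series $\sum_{n \geq 0} B_n X^n \frac{t^n}{n!}$ then yields $\G_n(X) = B_n^*(X) - B_n X^n = (B_n(X) - B_n)^*$ in one stroke, and the explicit sum $\sum_{k=0}^{n-1} \binom{n}{k} B_k X^k$ is only read off afterwards from the classical expansion of $B_n(X)$. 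You proceed in the opposite order: you use only the Bernoulli-number series $\frac{Xt}{e^{Xt}-1} = \sum_{k \geq 0} B_k X^k \frac{t^k}{k!}$, take its product with $e^t - 1$ via the exponential Cauchy-product rule recalled in the paper's introduction to get the explicit formula for $\G_n(X)$ first (the vanishing constant term of $e^t - 1$ killing the $k = n$ term), and then identify that sum with $(B_n(X) - B_n)^*$ by direct coefficient reversal, being rightly careful that the zero constant term of $B_n(X) - B_n$ turns into the vanishing coefficient of $X^n$. Your closing ``alternative'' (summing the $(B_n(X)-B_n)^*$ against $\frac{t^n}{n!}$ via $Z = \frac{1}{X}$, $s = Xt$) is essentially the paper's argument. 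Both routes are sound; yours needs nothing beyond the Bernoulli numbers and elementary bookkeeping, while the paper's substitution handles the reciprocal operation wholesale without inspecting coefficients. Two minor remarks: your reading of the defining series as $\frac{Xt}{e^{Xt}-1}(e^t-1)$ (rather than the misprinted numerator $e^{Xt}$) is the intended one and is what the paper's own proof uses; and for $n = 0$ the polynomial $B_0(X) - B_0$ is zero, so your claim of ``degree exactly $n$'' fails there, but the identity is trivially $0 = 0$ — an edge case the paper glosses over as well.
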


\begin{proof}
By definition of the Bernoulli polynomials, we have
$$
\frac{t e^{X t}}{e^t - 1} ~=~ \sum_{n = 0}^{+ \infty} B_n(X) \frac{t^n}{n!} .
$$
Then, By substituting in the latter $X$ by $\frac{1}{X}$ and $t$ by $X t$, we get
$$
\frac{X t e^t}{e^{X t} - 1} ~=~ \sum_{n = 0}^{+ \infty} X^n B_n\left(\frac{1}{X}\right) \frac{t^n}{n!} ;
$$
that is (since $\deg B_n = n$, $\forall n \in \N$)
\begin{equation}\label{eq3}
\frac{X t}{e^{X t} - 1} e^t ~=~ \sum_{n = 0}^{+ \infty} B_n^*(X) \frac{t^n}{n!} .
\end{equation}
On the other hand, we have by definition of the Bernoulli numbers:
$$
\frac{X t}{e^{X t} - 1} ~=~ \sum_{n = 0}^{+ \infty} B_n \frac{(X t)^n}{n!} ,
$$
that is
\begin{equation}\label{eq4}
\frac{X t}{e^{X t} - 1} ~=~ \sum_{n = 0}^{+ \infty} B_n X^n \frac{t^n}{n!} .
\end{equation}
By subtracting side to side \eqref{eq4} from \eqref{eq3}, we finally obtain
$$
\frac{X t}{e^{X t} - 1} (e^t - 1) ~=~ \sum_{n = 0}^{+ \infty} \big(B_n^*(X) - B_n X^n\big) \frac{t^n}{n!} .
$$
Comparing this with the identity defining the $\G_n(X)$'s, we derive that for all $n \in \N$, we have
$$
\G_n(X) ~=~ B_n^*(X) - B_n X^n ~=~ \left(B_n(X) - B_n\right)^* ,
$$
as required. The second equality of the proposition immediately follows from the well-known expression of the Bernoulli Polynomials in terms of the Bernoulli numbers, which is $B_n(X) = \sum_{k = 0}^{n} \binom{n}{k} B_k X^{n - k}$ ($\forall n \in \N$). This completes the proof.
\end{proof}

\begin{rmq}
Since we know that $B_n = 0$ if and only if $n$ is an odd integer $\geq 3$, then from the formula of Proposition \ref{p4}, we can precise the degree of the polynomial $\G_n$ ($n \in \N^*$). We have that:
$$
\deg \G_n ~=~ \begin{cases}
n - 1 & \text{if } n \text{ is odd or } n = 2 \\
n - 2 & \text{if } n \text{ is even and } n \geq 4 
\end{cases} .
$$
\end{rmq}

Further, from Theorem \ref{t1}, we derive the following corollary:

\begin{coll}\label{c5}
For any natural number $n$, the polynomial $\G_n(X)$ is integer-valued.
\end{coll}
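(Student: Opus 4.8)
The plan is to extract two facts from what precedes and then close with a standard interpolation argument. First, Proposition \ref{p4} tells us that $\G_n$ is honestly a polynomial, with $\deg \G_n \le n - 1$ (and $\G_0 = 0$, so that case is trivial). Second, Theorem \ref{t1} gives $\G_n(a) = G_{n,a} \in \Z$ for every integer $a \ge 2$; in particular $\G_n$ takes integer values at the $n$ consecutive integers $2, 3, \dots, n+1$.

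I would then invoke the classical criterion that a polynomial $P \in \Q[X]$ with $\deg P \le d$ which assumes integer values at $d+1$ consecutive integers belongs to $\int(\Z)$; applied with $d = n - 1$ this gives $\G_n \in \int(\Z)$ with no slack. To keep things self-contained I would prove the criterion in a couple of lines: after a translation we may assume the $d+1$ integers are $0, 1, \dots, d$; writing $P = \sum_{k=0}^{d} c_k \binom{X}{k}$ in terms of the $\Z$-basis $\binom{X}{k}$ of $\int(\Z)$, one has $c_k = (\Delta^k P)(0) = \sum_{j=0}^{k} (-1)^{k-j} \binom{k}{j} P(j)$, which is an integer by hypothesis; equivalently this is just the Newton forward-difference expansion $P(X) = \sum_{k=0}^{d} (\Delta^k P)(0) \binom{X}{k}$, whose coefficients are visibly integers. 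Hence $P$, and in particular $\G_n$, is integer-valued.

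I do not expect a genuine obstacle here: the substance is already carried by Proposition \ref{p4} and Theorem \ref{t1}, and the only point requiring any care is the bookkeeping that the number of consecutive integers at which integrality is known --- the $n$ values $a = 2, \dots, n+1$ --- is exactly one more than the degree bound $n - 1$, so the interpolation criterion applies with nothing to spare. One might instead attempt to prove integrality of $\G_n$ directly from the closed form $\G_n(X) = \sum_{k=0}^{n-1} \binom{n}{k} B_k X^k$ together with the von Staudt--Clausen theorem, but that essentially re-proves Theorem \ref{t1}, so routing through the interpolation criterion is the economical choice.
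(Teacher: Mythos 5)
Your proof is correct and follows essentially the same route as the paper: Theorem \ref{t1} supplies the integer values $\G_n(a) = G_{n,a}$ for integers $a \geq 2$, and the interpolation criterion (the paper's Lemma \ref{l2}), combined with the degree bound from Proposition \ref{p4}, yields $\G_n \in \int(\Z)$. The only cosmetic difference is that you re-prove the criterion via the Newton expansion $P(X) = \sum_{k} (\Delta^k P)(0) \binom{X}{k}$, whereas the paper proves Lemma \ref{l2} by a forward/backward finite-difference induction; both are standard and interchangeable here.
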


To prove this corollary, we lean on the following well-known lemma (see, e.g., \cite{cah}):

\begin{lemma}\label{l2}
Let $d \in \N$ and $P$ be a polynomial of $\Q[[X]]$ with degree $d$. For $P$ to be an integer-valued polynomial, it suffices that $P$ takes integer values for $(d + 1)$ consecutive integer values of $X$. 
\end{lemma}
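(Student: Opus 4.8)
The plan is to expand $P$ in the binomial basis $\binom{X}{k}$ and to show that all the expansion coefficients turn out to be integers, so that $P$ becomes a $\Z$-linear combination of the integer-valued polynomials $\binom{X}{k}$.

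First I would reduce to the case where the $d + 1$ consecutive integers are $0, 1, \dots, d$. If $a$ denotes the smallest of these consecutive integers, then the translation $X \mapsto X + a$ maps $\Z$ bijectively onto itself, so $P$ is integer-valued if and only if the shifted polynomial $P(X + a)$ is integer-valued. Replacing $P$ by $P(X + a)$, I may therefore assume that $P(0), P(1), \dots, P(d)$ are all integers, and it remains to prove that $P(n) \in \Z$ for every $n \in \Z$. Next, since the polynomials $\binom{X}{k}$ ($0 \leq k \leq d$) form a basis of the $\Q$-vector space of polynomials of degree $\leq d$ (as recalled in the introduction), I can write $P(X) = \sum_{k = 0}^{d} c_k \binom{X}{k}$ in a unique way, with $c_k \in \Q$.

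The crucial point is to identify these coefficients via the forward difference operator $\Delta$. Using the Pascal identity $\Delta \binom{X}{k} = \binom{X}{k - 1}$ (valid as a polynomial identity), I would apply $\Delta^j$ to both sides of the expansion and evaluate at $0$; since $\binom{0}{m} = 0$ for $m \geq 1$ while $\binom{0}{0} = 1$, the sum collapses to $c_j = (\Delta^j P)(0)$ for each $j$. Finally, expanding $\Delta^j = (E - \id)^j$, where $E$ denotes the shift operator $(E P)(X) := P(X + 1)$, yields the explicit formula
$$
(\Delta^j P)(0) ~=~ \sum_{i = 0}^{j} (-1)^{j - i} \binom{j}{i} P(i) ,
$$
which exhibits each $c_j = (\Delta^j P)(0)$ as an integer linear combination of the integers $P(0), P(1), \dots, P(j)$. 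Hence every $c_j$ is an integer, so $P = \sum_{k = 0}^{d} c_k \binom{X}{k}$ is a $\Z$-linear combination of integer-valued polynomials and is therefore itself integer-valued.

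\textbf{The main obstacle} is establishing the coefficient formula $c_j = (\Delta^j P)(0)$; this rests entirely on the Pascal identity $\Delta \binom{X}{k} = \binom{X}{k - 1}$, which is elementary but should be checked, together with the vanishing of $\binom{0}{m}$ for $m \geq 1$. Everything else is routine bookkeeping. An alternative, equally short route avoids the explicit basis: one inducts on $d$, noting that $\Delta P$ has degree $d - 1$ and takes integer values at the $d$ consecutive integers $0, 1, \dots, d - 1$, so that $\Delta P$ is integer-valued by the induction hypothesis; one then recovers $P(n)$ for every $n$ from $P(0) \in \Z$ by telescoping $P(n) = P(0) + \sum_{i = 0}^{n - 1} (\Delta P)(i)$ (with the analogous summation for $n < 0$).
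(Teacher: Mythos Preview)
Your proof is correct, but it follows a genuinely different route from the paper's. The paper does not pass to the binomial basis at all: instead it notes that $\deg P = d$ forces $\Delta^{d+1} P = 0$, rewrites this as the recurrence
\[
P(X + d + 1) ~=~ \sum_{k=0}^{d} (-1)^{d-k} \binom{d+1}{k} P(X + k),
\]
and uses it to propagate integrality of $P$ forward from the given block $a, a+1, \dots, a+d$ to all $x \geq a+d$; it then applies the same trick to $P(-X)$ to cover $x \leq a$. Your argument, by contrast, reduces by translation to $a = 0$, writes $P = \sum_{k} c_k \binom{X}{k}$, and identifies $c_j = (\Delta^j P)(0)$ as an integer combination of $P(0), \dots, P(j)$. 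Both are standard and of comparable length; your approach has the advantage of producing the explicit Newton expansion of $P$ (which the paper in fact invokes later, in the first open problem), while the paper's recurrence argument avoids any appeal to the basis property of the $\binom{X}{k}$ and works directly with values. Your alternative inductive route via $\Delta P$ is also fine and is closer in spirit to the paper's method.
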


\begin{proof}
Suppose that $P$ takes integer values for some $(d + 1)$ consecutive integer values of $X$, which are: $a , a + 1 , \dots , a + d$ ($a \in \Z$) and let us show that $P$ is an integer-valued polynomial. Since $\deg P = d$ then we have that $\Delta^{d + 1} P = 0$; that is
$$
P(X + d + 1) ~=~ \sum_{k = 0}^{d} (-1)^{d - k} \binom{d + 1}{k} P(X + k) .
$$
Using this identity, we immediately deduce by induction that:
\begin{equation}\label{eq5}
P(x) \in \Z ~~~~~~~~~~ (\forall x \in \Z , x \geq a + d) .
\end{equation}
Next, if we take instead of $P(X)$ the polynomial $P(-X)$, which has the same degree with $P$ and takes integer values for the $(d + 1)$ consecutive integer values $- a - d$, $- a - d + 1$, $\dots$, $- a$ of $X$, we similarly obtain that:
$$
P(- x) \in \Z ~~~~~~~~~~ (\forall x \in \Z , x \geq - a) ;
$$
that is
\begin{equation}\label{eq6}
P(x) \in \Z ~~~~~~~~~~ (\forall x \in \Z , x \leq a) .
\end{equation}
From \eqref{eq5} and \eqref{eq6}, we conclude that $P(x) \in \Z$, $\forall x \in \Z$. Thus $P$ is an integer-valued polynomial. The lemma is proved.
\end{proof}

Let us now prove Corollary \ref{c5}:

\begin{proof}[Proof of Corollary \ref{c5}]
Let $n \in \N$ be fixed. Since for any integer $a \geq 2$, we have $\G_n(a) = G_{n , a} \in \Z$ (according to Theorem \ref{t1}) then the polynomial $\G_n(X)$ takes integer values for an infinite number of consecutive integer values of $X$. It follows (according to Lemma \ref{l2}) that $\G_n(X)$ is an integer-valued polynomial. This achieves the proof.
\end{proof}

Next, from Proposition \ref{p4} and Corollary \ref{c5}, we derive the following curious result concerning the Bernoulli polynomials of odd degree.

\begin{coll}\label{c6}
For any odd integer $n \geq 3$, the reciprocal polynomial of the Bernoulli polynomial $B_n(X)$ is integer-valued. 
\end{coll}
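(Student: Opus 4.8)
The plan is to reduce the statement directly to Corollary \ref{c5}, using the vanishing of the Bernoulli numbers of odd index. First I would recall the classical fact, already noted in the introduction, that $B_n = 0$ for every odd integer $n \geq 3$. Hence, for such an $n$, the polynomial $B_n(X) - B_n$ is literally equal to $B_n(X)$; in particular they share the same degree $n$, so no ambiguity arises in forming their reciprocals, and we have $\big(B_n(X)\big)^* = \big(B_n(X) - B_n\big)^*$.

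Next I would invoke Proposition \ref{p4}, which identifies $\big(B_n(X) - B_n\big)^*$ with the polynomial $\G_n(X)$. Chaining the two identities yields $\big(B_n(X)\big)^* = \G_n(X)$ for every odd $n \geq 3$. Finally, Corollary \ref{c5} asserts that $\G_n(X)$ is integer-valued for every $n \in \N$; applying it to our odd $n$, we conclude that $\big(B_n(X)\big)^*$ is integer-valued, which is exactly the assertion.

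There is essentially no obstacle here: the only point requiring a moment's attention is that the reciprocal operation is sensitive to the degree one assigns to a polynomial, but since $B_n = 0$ makes $B_n(X) - B_n$ and $B_n(X)$ the very same polynomial of degree $n$, this subtlety evaporates. All the substantive work has already been carried out in Theorem \ref{t1}, Proposition \ref{p4}, and Corollary \ref{c5}, so the proof of the corollary is a one-line deduction.
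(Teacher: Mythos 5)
Your proposal is correct and follows exactly the paper's own argument: since $B_n = 0$ for odd $n \geq 3$, the identity $\big(B_n(X)\big)^* = \big(B_n(X) - B_n\big)^* = \G_n(X)$ from Proposition \ref{p4} combined with Corollary \ref{c5} gives the result immediately. Your extra remark about the reciprocal operation being degree-sensitive is a nice touch, but otherwise this is the same one-line deduction as in the paper.
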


\begin{proof}
This is an immediate consequence of Proposition \ref{p4}, Corollary \ref{c5} and the well-known fact that $B_n = 0$ for $n$ odd, $n \geq 3$.
\end{proof}

We now turn to present another important property concerning the reciprocal polynomials of some particular type of integer-valued polynomials. For a given $n \in \N$, we define
$$
\sigma_n(a) ~:=~ 0^n + 1^n + \dots + a^n ~~~~~~~~~~ (\forall a \in \N) ,
$$
where we convention that $0^0 = 1$. \\
It has been known for a very long time that $\sigma_n(a)$ is polynomial on $a$ with degree $(n + 1)$, but a closed form of the polynomial in question was discovered for the first time by Jacob Bernoulli \cite{ber} and it is given by:
$$
\sigma_n(a) ~=~ \frac{1}{n + 1} \sum_{k = 0}^{n} \binom{n + 1}{k} B_k a^{n + 1 - k} + a^n .
$$
For a given $n \in \N$, let us define $\sigma_n(X)$ as the polynomial interpolating the sequence ${(\sigma_n(a))}_{a \in \N}$; that is
\begin{equation}\label{eq7}
\sigma_n(X) ~=~ \frac{1}{n + 1} \sum_{k = 0}^{n} \binom{n + 1}{k} B_k X^{n + 1 - k} + X^n .
\end{equation}
For $n \in \N$, since the $\sigma_n(a)$'s ($a \in \N$) are obviously all integers then (according to Lemma \ref{l2}) the polynomial $\sigma_n(X)$ is an integer-valued polynomial. But what about its reciprocal polynomial? The previous results permit us to obtain something in this direction. We have the following proposition:

\begin{prop}\label{p5}
For any natural number $n$, the polynomial $(n + 1) \sigma_n^*(X)$ is integer-valued.
\end{prop}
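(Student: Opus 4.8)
The plan is to reduce the statement to Corollary \ref{c5} by computing the reciprocal polynomial $\sigma_n^*(X)$ explicitly from the Bernoulli closed form \eqref{eq7}, and then recognizing that $(n+1)\sigma_n^*(X)$ differs from the polynomial $\G_{n+1}(X)$ only by the harmless term $(n+1)X$.

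First I would record that $\deg \sigma_n = n+1$ (the $k = 0$ term in \eqref{eq7} contributes $\frac{1}{n+1} X^{n+1}$, since $B_0 = 1$, while the extra monomial $X^n$ has strictly smaller degree), so that, by the formula $P^*(X) = X^{\deg P} P(\frac1X)$ recalled in the introduction, $\sigma_n^*(X) = X^{n+1}\, \sigma_n(\frac1X)$. Substituting $X \mapsto \frac1X$ in \eqref{eq7} and multiplying through by $X^{n+1}$ turns the exponent $n + 1 - k$ into $k$ and the exponent $n$ into $1$, giving
$$
\sigma_n^*(X) ~=~ \frac{1}{n + 1} \sum_{k = 0}^{n} \binom{n + 1}{k} B_k X^k ~+~ X ,
$$
and hence
$$
(n + 1)\, \sigma_n^*(X) ~=~ \sum_{k = 0}^{n} \binom{n + 1}{k} B_k X^k ~+~ (n + 1) X .
$$

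Next I would invoke the second equality of Proposition \ref{p4} with $n$ replaced by $n + 1$: it states precisely that $\G_{n+1}(X) = \sum_{k=0}^{n} \binom{n+1}{k} B_k X^k$. Therefore
$$
(n + 1)\, \sigma_n^*(X) ~=~ \G_{n+1}(X) ~+~ (n + 1) X .
$$
By Corollary \ref{c5} the polynomial $\G_{n+1}(X)$ is integer-valued, the polynomial $(n+1)X$ is trivially integer-valued, and $\int(Z)$ is a $\Z$-algebra (in particular closed under addition); so $(n+1)\sigma_n^*(X)$ is integer-valued, which is the assertion.

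I do not expect a genuine obstacle: this is essentially a one-line coefficient computation once \eqref{eq7} and Proposition \ref{p4} are on the table. The only point that needs a little care is the exponent bookkeeping under the reversal $X \mapsto \frac1X$ — in particular, verifying that the ``extra'' monomial $X^n$ in \eqref{eq7} becomes exactly the linear term $X$ after reversal, and that the reversed sum is indexed over $0 \le k \le n$, so that it matches verbatim the defining sum for $\G_{n+1}$ in Proposition \ref{p4}.
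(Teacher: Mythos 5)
Your proposal is correct and follows essentially the same route as the paper: compute $\sigma_n^*(X)$ from \eqref{eq7}, identify the sum with $\G_{n+1}(X)$ via Proposition \ref{p4}, and conclude from Corollary \ref{c5} that $(n+1)\sigma_n^*(X) = \G_{n+1}(X) + (n+1)X$ is integer-valued. The only difference is that you spell out the reversal bookkeeping (via $P^*(X) = X^{\deg P}P(\tfrac{1}{X})$ and $\deg\sigma_n = n+1$), which the paper leaves implicit.
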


\begin{proof}
Let $n \in \N$ be fixed. According to \eqref{eq7}, we have
\begin{eqnarray*}
\sigma_n^*(X) & = & \frac{1}{n + 1} \sum_{k = 0}^{n} \binom{n + 1}{k} B_k X^k + X \\
& = & \frac{1}{n + 1} \G_{n + 1}(X) + X ~~~~~~~~~~ (\text{according to Proposition \ref{p4}}) .
\end{eqnarray*}
Hence
$$
(n + 1) \sigma_n^*(X) ~=~ \G_{n + 1}(X) + (n + 1) X .
$$
Since $\G_{n + 1}(X)$ is integer-valued (according to Corollary \ref{c5}), the last equality shows that also $(n + 1) \sigma_n^*(X)$ is integer-valued (as the sum of two integer-valued polynomials). The proposition is proved.
\end{proof}

\subsection*{Two important open problems}
{\large\bf 1.---} Since the polynomials $\G_n(X)$ are integer-valued (according to Corollary \ref{c5}) then they admit representations as linear combinations, with integer coefficients, of the polynomials $\binom{X}{k}$ ($k \in \N$). Precisely, there exist integers $a_{n , k}$ ($n \in \N^*$, $k \in \N$, $0 \leq k < n$) for which we have
$$
\G_n(X) ~=~ a_{n , 0} \binom{X}{0} + a_{n , 1} \binom{X}{1} + \dots + a_{n , n - 1} \binom{X}{n - 1} ~~~~~~~~~~ (\forall n \in \N^*) .
$$
The $a_{n , k}$'s can be calculated for example by using the Newton interpolation formula:
$$
P(X) ~=~ \sum_{k = 0}^{\deg P} \left(\Delta^k P\right)(0) \binom{X}{k} ~~~~~~~~~~ (\forall P \in \Q[X]) .
$$
So, we have that:
$$
a_{n , k} ~=~ \left(\Delta^k \G_n\right)(0) ~~~~~~~~~~ (\forall n \in \N^* , \forall k \in \N , 0 \leq k < n) .
$$
If we arrange those integers $a_{n , k}$ ($0 \leq k < n$) in a triangle array in which each $a_{n , k}$ is the entry in the $n$\up{th} row and $k$\up{th} column, we obtain (after calculation) the following configuration: \pagebreak

\begin{table}[!h]
\begin{center}
\begin{tabular}{l|c@{\hspace*{9mm}}c@{\hspace*{9mm}}c@{\hspace*{9mm}}c@{\hspace*{9mm}}c@{\hspace*{9mm}}c@{\hspace*{9mm}}c}
$n = 1$ & $1$ & ~ & ~ & ~ & ~ & ~ & ~ \\
$n = 2$ & $1$ & $- 1$ & ~ & ~ & ~ & ~ & ~ \\
$n = 3$ & $1$ & $- 1$ & $1$ & ~ & ~ & ~ & ~ \\
$n = 4$ & $1$ & $- 1$ & $2$ & ~ & ~ & ~ & ~ \\
$n = 5$ & $1$ & $- 1$ & $1$ & $- 6$ & $- 4$ & ~ & ~ \\
$n = 6$ & $1$ & $- 1$ & $- 2$ & $- 18$ & $- 12$ ~ & ~ \\
$n = 7$ & $1$ & $- 1$ & $1$ & $48$ & $232$ & $300$ & $120$ \\
$n = 8$ & $1$ & $- 1$ & $18$ & $276$ & $984$ & $1200$ & $480$
\end{tabular}
\end{center}
\caption{The triangle of the $a_{n , k}$'s for $0 \leq k < n \leq 8$}
\end{table}

Note that in this triangle, we have omitted the integers $a_{n , n - 1}$ for the even $n$'s (since they are zero). \\
The interesting problem we pose here consists to find a simple and practical rule to construct the above triangle step by step. 

\medskip

\noindent{\large\bf 2.---} For a polynomial $P \in \int(Z)$, we don't have in general $(\deg P) \cdot P^* \in \int(Z)$ (indeed, the polynomial $\binom{X}{3} = \frac{X (X - 1) (X - 2)}{6}$ provides a counterexample); however, the polynomials $\sigma_n(X)$ ($n \in \N$) satisfy this property (according to Proposition \ref{p5}). So, it is interesting to study for which category of integer-valued polynomials, the above property is satisfied.


\begin{thebibliography}{99}
\bibitem{ber}
{\sc J. Bernoulli}. Ars Conjectandi, Thurneysen Brothers, B\^ale, 1713.
\bibitem{cah}
{\sc P-J. Cahen \& J-L. Chabert}. Integer-valued polynomials, {\it Mathematical Surveys and Monographs}, Providence, RI: American Mathematical Society, {\bf 48}, (1997).
\bibitem{com}
{\sc L. Comtet}. Advanced Combinatorics. The Art of Finite and Infinite Expansions, revised and enlarged ed., {\it D. Reidel Publ. Co.}, Dordrecht, 1974.
\bibitem{dum2}
{\sc D. Dumont \& D. Foata}. Une propri\'et\'e de sym\'etrie des nombres de Genocchi, {\it Bull. Soc. Math. France}, {\bf 104} (1976), p. 433-451.
\bibitem{gan}
{\sc J. Gandhi}. A conjectured representation of Genocchi numbers, {\it Amer. Math. Monthly}, {\bf 77} (1970), p. 505-506.
\bibitem{gen}
{\sc A. Genocchi}. Intorno all espressioni generali di numeri Bernoulliani, {\it Annali di scienze mat. e fisiche, compilati da Barnaba Tortolini}, {\bf 3} (1852), p. 395-405.
\bibitem{nie}
{\sc N. Nielsen}. Traité élémentaire des nombres de Bernoulli, {\it Gauthier–Villars}, Paris, 1923.
\bibitem{pol}
{\sc G. P\'olya}. Über ganzwertige ganze Funktionen, {\it Palermo Rend.} (in German), {\bf 40} (1915), p. 1-16.
\bibitem{rio}
{\sc J. Riordan \& P. Stein}. Proof of a conjecture on Genocchi numbers, {\it Disc. Math.},
{\bf 5} (1973), p. 381-388.
\end{thebibliography}
\end{document}